\theoremstyle{plain}
\numberwithin{equation}{section}
\newtheorem{thm}{Theorem}[section]
\newtheorem{theorem}[thm]{Theorem}
\begin{document}
\fancyhead{}
\renewcommand{\headrulewidth}{0pt}
\fancyfoot{}
\fancyfoot[LE,RO]{\medskip \thepage}

\setcounter{page}{1}

\title[On Squares in Lucas Sequences]{On Squares in Lucas Sequences}
\author{Francesca Balestrieri}
\address{Mathematical Institute\\
                University of Oxford\\
                UK}
\email{balestrieri@maths.ox.ac.uk}

\begin{abstract} In this short paper,  we prove, by only using elementary tools, general cases when $U_n(P,Q) \neq \square$, where $U_n(P,Q)$ is the Lucas sequence of the first type. \end{abstract}

\maketitle

\section{Introduction}
In this paper, we are interested in the \emph{Lucas sequence of the first type}, defined by
\begin{equation}\label{recrel}
U_n(P,Q) =P \cdot U_{n-1}(P,Q) - Q \cdot U_{n-2}(P,Q),
\end{equation}
where $P,Q$ are fixed integers,  with $U_0(P,Q) =0$, and  $U_1(P,Q)) =1$; in particular, we want to investigate for which values of $n$, $P$, and $Q$ we have that $U_n(P,Q)$ is not a square.
There are some results in the literature concerning squares in Lucas sequences.  For example, in \cite{ribenboim}, Ribenboim and McDaniel have proven that, provided that $P^2-4Q>0$ and $P,Q$ are odd and relatively prime, the only values of $n$ such that $U_n(P,Q) = \square$ are $n =0, 1, 2, 3, 6$ or $12 $ (their proof was extended in \cite{bremtza} by Bremner and Tzanakis, who, in the case for $n = 12$, removed the conditions $P^2-4Q>0$ and $P,Q$ odd, by using techniques from arithmetic geometry).
The symbol `$\square$' denotes a non-zero square.
The results we are going to prove in this paper do not require that $P^2 - 4Q >0$, at the cost of introducing some other mild constraints on $n, P,Q$.\\

\textbf{Remark 1.1. } Methods in arithmetic geometry can help us computing all the values of $P$ and $Q$ such that $U_n(P,Q) = \square$, for a fixed $n$; see \cite{bremtza2} for many examples.
The downside of this approach is that, for $n \geq 15$, the hyperelliptic curves $y^2 = f(x)$ obtained by setting $x = \frac{Q}{P^2}$ in the expressions for $U_n(P,Q)$ have genus $g \geq 3$, meaning that they are not easily tractable, at least using the current methods and tools available (most of MAGMA routines, for example, apply only to curves of genus $g \leq 2$). On the other hand, for $n \geq 11$, the hyperelliptic curves obtained in the way described above have genus $g \geq 2$; by Faltings' theorem \cite{falt}, it follows that there are only finitely many values of coprime $P$ and $Q$ such that $U_n(P,Q) = \square$, for $n \geq 11$.

\section{Preliminaries}

There is an interesting relation between the coefficients of the $P^\alpha Q^\beta$ terms in $U_n(P,Q)$ and the Pascal triangle: these coefficients are, up to a sign, the elements in the diagonals of the Pascal triangle. This motivates the following (known, see \cite{lucas}) closed formula.

\begin{theorem} For $n \geq 1$, we have
\begin{equation}\label{formula}
U_n(P,Q) = \sum_{r=0}^{\lfloor \frac{n - 1}{2}\rfloor} (-1)^r P^{n-1 - 2r} Q^r \binom {n-1-r} {r}. 
\end{equation}
\end{theorem}

\begin{proof} This is an easy induction. We write $U_n$ for $U_n(P,Q)$, in order to ease notation. The base cases for $n =1,2$ hold.

For $n \geq 3$, we need to distinguish two cases.
\begin{enumerate}
\item \textbf{Case $ \lfloor \frac{n - 2}{2}\rfloor = \lfloor \frac{n - 3}{2}\rfloor + 1$.} Then $n$ is even, and $ \lfloor \frac{n - 2}{2}\rfloor = \lfloor \frac{n - 1}{2}\rfloor$. We have
\[
\begin{array}{rl}
U_n = & P \cdot U_{n-1} - Q \cdot U_{n-2}\\
  =&  \sum_{r=0}^{\lfloor \frac{n - 2}{2}\rfloor} (-1)^r P^{n-1 - 2r} Q^r \binom {n-2-r} {r} + \sum_{r=0}^{\lfloor \frac{n - 3}{2}\rfloor} (-1)^{r+1} P^{n-3 - 2r} Q^{r+1} \binom {n-3-r} {r}\\
=&  \sum_{r=0}^{\lfloor \frac{n - 2}{2}\rfloor} (-1)^r P^{n-1 - 2r} Q^r \binom {n-2-r} {r} + \sum_{k=1}^{\lfloor \frac{n - 3}{2}\rfloor+1} (-1)^k P^{n-3 - 2(k-1)} Q^{k} \binom {n-3-k+1} {k-1}\\
=&  \sum_{r=0}^{\lfloor \frac{n - 2}{2}\rfloor} (-1)^r P^{n-1 - 2r} Q^r \binom {n-2-r} {r} + \sum_{k=1}^{\lfloor \frac{n - 2}{2}\rfloor} (-1)^k P^{n-1 - 2k} Q^{k} \binom {n-2-k} {k-1}\\
=&  \sum_{r=1}^{\lfloor \frac{n - 2}{2}\rfloor} (-1)^r P^{n-1 - 2r} Q^r (\binom {n-2-r} {r} + \binom{n-2-r}{r-1})
+  P^{n-1} \\
=&  \sum_{r=1}^{\lfloor \frac{n - 2}{2}\rfloor} (-1)^r P^{n-1 - 2r} Q^r \binom {n-1-r} {r} 
+  P^{n-1} \\
=&  \sum_{r=0}^{\lfloor \frac{n - 1}{2}\rfloor} (-1)^r P^{n-1 - 2r} Q^r \binom {n-1-r} {r}, 
\end{array}
\]
where we have used the  standard binomial identity $\binom {k} {r} + \binom{k}{r-1}= \binom {k+1} {r} $.

\item \textbf{Case $ \lfloor \frac{n - 2}{2}\rfloor = \lfloor \frac{n - 3}{2}\rfloor$.} Then $n$ is odd, and $\lfloor \frac{n - 2}{2}\rfloor +1 = \lfloor \frac{n - 1}{2}\rfloor = \frac{n - 1}{2}$. We have
\[
\begin{array}{rl}
U_n = & P \cdot U_{n-1} - Q \cdot U_{n-2}\\
  =&  \sum_{r=0}^{\lfloor \frac{n - 2}{2}\rfloor} (-1)^r P^{n-1 - 2r} Q^r \binom {n-2-r} {r} + \sum_{r=0}^{\lfloor \frac{n - 3}{2}\rfloor} (-1)^{r+1} P^{n-3 - 2r} Q^{r+1} \binom {n-3-r} {r}\\
=&  \sum_{r=0}^{\lfloor \frac{n - 2}{2}\rfloor} (-1)^r P^{n-1 - 2r} Q^r \binom {n-2-r} {r} + \sum_{k=1}^{\lfloor \frac{n - 3}{2}\rfloor+1} (-1)^k P^{n-3 - 2(k-1)} Q^{k} \binom {n-3-k+1} {k-1}\\
=&  \sum_{r=0}^{\lfloor \frac{n - 2}{2}\rfloor} (-1)^r P^{n-1 - 2r} Q^r \binom {n-2-r} {r} + \sum_{k=1}^{\lfloor \frac{n - 2}{2}\rfloor+1} (-1)^k P^{n-1 - 2k} Q^{k} \binom {n-2-k} {k-1}\\
=&  \sum_{r=0}^{\lfloor \frac{n - 1}{2}\rfloor-1} (-1)^r P^{n-1 - 2r} Q^r \binom {n-2-r} {r} + \sum_{k=1}^{\lfloor \frac{n - 1}{2}\rfloor} (-1)^k P^{n-1 - 2k} Q^{k} \binom {n-2-k} {k-1}\\
=&  \sum_{r=1}^{\lfloor \frac{n - 1}{2}\rfloor-1} (-1)^r P^{n-1 - 2r} Q^r (\binom {n-2-r} {r} + \binom{n-2-r}{r-1})
+  P^{n-1} \\
& + (-1)^{\lfloor \frac{n-1}{2} \rfloor} P^{n-1-2\lfloor \frac{n-1}{2} \rfloor}Q^{\lfloor \frac{n-1}{2} \rfloor} \binom{n-2-\lfloor \frac{n-1}{2} \rfloor}{\lfloor \frac{n-1}{2} \rfloor-1} \\
=&  \sum_{r=1}^{\lfloor \frac{n - 1}{2}\rfloor-1} (-1)^r P^{n-1 - 2r} Q^r \binom {n-1-r} {r} 
+  P^{n-1}+ (-1)^{\lfloor \frac{n-1}{2} \rfloor} P^{n-1-2\lfloor \frac{n-1}{2} \rfloor}Q^{\lfloor \frac{n-1}{2} \rfloor} \binom{n-1-\lfloor \frac{n-1}{2} \rfloor}{\lfloor \frac{n-1}{2} \rfloor} \\
=&  \sum_{r=0}^{\lfloor \frac{n - 1}{2}\rfloor} (-1)^r P^{n-1 - 2r} Q^r \binom {n-1-r} {r},
\end{array}
\]
where the penultimate equality follows as $\binom{n-2-\lfloor \frac{n-1}{2} \rfloor}{\lfloor \frac{n-1}{2} \rfloor-1} = \binom{n-2- \frac{n-1}{2}} {\frac{n-1}{2}-1} = \binom{\frac{n-3}{2}}{\frac{n-3}{2}}=1$ and  \\ $\binom{n-1-\lfloor \frac{n-1}{2} \rfloor}{\lfloor \frac{n-1}{2} \rfloor} =\binom{\frac{n-1}{2}}{\frac{n-1}{2}}=1$.
\end{enumerate}
By induction, the result follows.\end{proof}

\section{Our Results}
We will use the elementary fact that any perfect square (in $\mathbb{N}$) is congruent to 0 or 1 modulo 4.
Suppose that $P, Q, n$ are odd. Note that since $P$ is odd, $P^2 \equiv 1$ (mod 4). Consider the following examples, which we will try to generalise:
\begin{enumerate}
\item $n = 3$: Suppose $Q \equiv 3$ (mod 4). Then $U_{3}(P,Q) = P^2 - Q \equiv 1 - 3 \equiv 2$ (mod 4), so $U_3(P,Q) \neq \square$.
\item $n = 5$:  Suppose $Q \equiv 1$ (mod 4). Then $U_{5}(P,Q) =P^4 - 3P^2Q + Q^2 \equiv 1-3Q +1 \equiv 2 - 3Q \equiv 3$ (mod 4), so $U_5(P,Q) \neq \square$.
\item $n = 7$:  We have that $U_{7}(P,Q) = P^6 - 5 P^4 Q + 6 P^2 Q^2 - Q^3 \equiv 1 - Q + 2 - Q \equiv 3 - 2Q \equiv 1$ (mod 4), so we cannot say anything more.
\item $n = 9$:  Suppose $Q \equiv 3$ (mod 4). Then  $U_{9}(P,Q) =P^8 - 7 P^6 Q + 15 P^4 Q^2 - 10 P^2 Q^3 + Q^4 \equiv 1-7Q+15-10Q+1 \equiv 3Q + 1 \equiv -3+1 \equiv 2$ (mod 4). So $U_9(P,Q) \neq \square$.
\item $n = 11$:  Suppose $Q \equiv 1$ (mod 4). Then  $U_{11}(P,Q) =P^{10} - 9 P^8 Q + 28 P^6 Q^2 - 35 P^4 Q^3 + 15 P^2 Q^4 - Q^5 \equiv 1 +3Q+Q+3-Q \equiv 3Q \equiv 3$ (mod 4). So $U_{11}(P,Q) \neq \square$.
\item $n = 13$:  We have that  $U_{13}(P,Q) =P^{12} - 11 P^{10} Q + 45 P^8 Q^2 - 84 P^6 Q^3 + 70 P^4 Q^4 - 21 P^2 Q^5 + Q^6 \equiv 1 + Q + 1 + 2 - Q + 1 \equiv 1 $ (mod 4). So we cannot say anything more.
\item $n = 15$:  Suppose $Q \equiv 3$ (mod 4). Then  $U_{15}(P,Q) = P^{14} - 13 P^{12} Q + 66 P^{10} Q^2 - 165 P^8 Q^3 + 210 P^6 Q^4 - 126 P^4 Q^5 + 28 P^2 Q^6 - Q^7 \equiv 1 +3Q +2 +3Q+2 + 2Q+3Q \equiv 1 + 3Q \equiv 2$ (mod 4). So $U_{15}(P,Q) \neq \square$.
\item $n = 17$:  Suppose $Q \equiv 1$ (mod 4). Then  $U_{17}(P,Q) =P^{16} - 15 P^{14} Q + 91 P^{12} Q^2 - 286 P^{10} Q^3 + 495 P^8 Q^4 - 462 P^6 Q^5 + 210 P^4 Q^6 - 36 P^2 Q^7 + Q^8 \equiv  1 +  Q + 3 +2Q + 3 + 2Q + 2 + 1 \equiv  Q + 2 \equiv 3 $ (mod 4). So $U_{17}(P,Q) \neq \square$.
\end{enumerate} 

A pattern clearly emerges, and motivates the following. 

\begin{theorem} For $P,Q, n$ odd, we have
\begin{equation}
U_n(P,Q) \neq \square \textrm{ if }
\begin{cases}
n \equiv 3 \textrm{ (mod 6) and } Q \equiv 3 \textrm{ (mod 4), }\\ 
n \equiv 5 \textrm{ (mod 6) and } Q \equiv 1 \textrm{ (mod 4).}\\ 
\end{cases}
\end{equation}
\end{theorem}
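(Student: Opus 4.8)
The plan is to work modulo $4$ directly with the closed formula \eqref{formula}, exactly as in the worked examples, but bookkeeping the coefficients $\binom{n-1-r}{r}$ systematically rather than case by case. Since $P$ is odd, $P^2 \equiv 1 \pmod 4$, so $P^{n-1-2r} \equiv 1 \pmod 4$ for every $r$ (note $n-1-2r$ is even), and the formula collapses to
\[
U_n(P,Q) \equiv \sum_{r=0}^{(n-1)/2} (-1)^r \binom{n-1-r}{r} Q^r \pmod 4.
\]
Now $Q$ is odd, so $Q \equiv \pm 1 \pmod 4$ according to the two cases, and $Q^r \equiv (\pm 1)^r \pmod 4$. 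In the case $Q \equiv 1 \pmod 4$ the sum becomes $\sum_r (-1)^r \binom{n-1-r}{r} \pmod 4$; in the case $Q \equiv 3 \equiv -1 \pmod 4$ it becomes $\sum_r \binom{n-1-r}{r} \pmod 4$. So the whole theorem reduces to two congruences for the integer sequences
\[
A_n := \sum_{r=0}^{\lfloor (n-1)/2\rfloor} (-1)^r \binom{n-1-r}{r}, \qquad B_n := \sum_{r=0}^{\lfloor (n-1)/2\rfloor} \binom{n-1-r}{r},
\]
namely: $B_n \equiv 2 \pmod 4$ when $n \equiv 3 \pmod 6$, and $A_n \equiv 3 \pmod 4$ when $n \equiv 5 \pmod 6$. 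I would also remark that $A_n = U_n(1,1)$ and $B_n = U_n(1,-1)$, i.e.\ $A_n$ is the purely periodic sequence $0,1,1,0,-1,-1,0,1,1,\dots$ with period $6$ (values of $U_n(1,1)$), and $B_n$ is the Fibonacci sequence $F_n$; this identification is the cleanest route, since then the two required facts are just ``$F_n \equiv 2 \pmod 4$ for $n \equiv 3 \pmod 6$'' and ``$A_n = -1$, hence $\equiv 3 \pmod 4$, for $n \equiv 5 \pmod 6$.''

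The main work is then to prove these two facts about $A_n$ and $B_n = F_n$. For $A_n$: since $A_n = U_n(1,1)$ satisfies $A_n = A_{n-1} - A_{n-2}$ with $A_0 = 0$, $A_1 = 1$, a trivial induction shows $A_n$ has period $6$ with $A_0,\dots,A_5 = 0,1,1,0,-1,-1$; in particular $n \equiv 5 \pmod 6$ gives $A_n = -1 \equiv 3 \pmod 4$. For $B_n = F_n$: the $2$-adic behaviour of Fibonacci numbers is classical — $F_n$ is even iff $3 \mid n$, and more precisely $F_n \equiv 0 \pmod 4$ iff $6 \mid n$, while $F_n \equiv 2 \pmod 4$ iff $n \equiv 3 \pmod 6$. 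This follows from computing the Fibonacci sequence modulo $4$: $0,1,1,2,3,1,0,1,1,2,3,1,\dots$, which has period $6$, and reading off the residue at $n \equiv 3$. So I would just prove ``$F_n \bmod 4$ is periodic with period $6$ and equals $2$ at $n \equiv 3$'' by induction on blocks of length $6$ (or by the Pisano-period fact $\pi(4) = 6$, with a one-line check).

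There is essentially no hard step here; the only thing requiring care is making sure the reduction is airtight — in particular that $Q^r \pmod 4$ really only depends on $Q \bmod 4$ and the parity considerations on exponents are correct (they are, since $n$ is odd so $n-1-2r$ is always even and nonnegative for $0 \le r \le (n-1)/2$). If one prefers to avoid invoking properties of Fibonacci numbers as ``known,'' the cleanest self-contained presentation is: define the mod-$4$ reductions $\bar A_n, \bar B_n$, observe each satisfies a second-order linear recurrence over $\Z/4\Z$ with the same initial data $0,1$, hence each is eventually periodic and in fact purely periodic with period dividing $6$ (check $\bar A_6 = \bar A_0$, $\bar A_7 = \bar A_1$ and similarly for $\bar B$), tabulate one period, and read off the two entries needed. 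I expect writing out the two six-term tables and the induction verifying periodicity to be the bulk of the (short) proof.
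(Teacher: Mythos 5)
Your proposal is correct and follows essentially the same route as the paper: reduce the closed formula modulo $4$ using $P^{n-1-2r}\equiv 1$ and $Q\equiv\pm1$, identify the resulting sums with $U_n(1,1)$ and $U_n(1,-1)=F_n$, and conclude from their period-$6$ behaviour modulo $4$ ($3\pmod 4$ and $2\pmod 4$ respectively, neither a square). Your suggestion to justify the periodicity by a short induction over $\Z/4\Z$ is a slightly more explicit version of the paper's tabulation, but the argument is the same.
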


\begin{proof} Let $n \equiv 3 \textrm{ (mod 6)}$ and $Q \equiv -1$ (mod 4). From \eqref{formula}, we get
\begin{equation}\label{case1}
U_n(P,Q) \equiv \sum_{r=0}^{\lfloor \frac{n - 1}{2}\rfloor} \binom {n-1-r} {r} \textrm{ (mod 4)},
\end{equation}
since the powers of $P$ in the sum are always even (since $n$ is odd), meaning  (since $P$ is odd) that these powers are congruent to 1 modulo 4. But \eqref{case1} implies that $U_n(P,Q) \equiv U_n(1,-1) =  F_n$ (mod 4), where $F_n$ is the $n$-th Fibonacci number (starting from $F_1 = 1$, $F_2=1$, $F_3 = 2$, and so on). Then the result follows immediately: consider
\[
\begin{array} {l|cccccccccc}
n & 1 & 2& \textbf{3}& 4& 5& 6& 7& 8& \textbf{9} & ...\\
\hline
F_n& 1 & 1 & \textbf{2} & 3&5&8&13&21&\textbf{34} & ...\\
F_n \textrm{ (mod 4)} & 1 & 1 & \textbf{2} & 3&1&0&1&1&\textbf{2} & ... .\\
\end{array}\]
As we can see from the above table, reduction modulo 4 of the Fibonacci numbers is cyclic, with period $1,1,2,3,1,0$ of length 6. This means that $U_n(P,Q) \equiv F_n \equiv 2$ (mod 4) precisely when $n \equiv 3$ (mod 6), as required.

Now let $n \equiv 5 \textrm{ (mod 6)}$ and $Q \equiv 1$ (mod 4). From \eqref{formula} again, we can deduce that
\begin{equation}\label{case2}
U_n(P,Q) \equiv \sum_{r=0}^{\lfloor \frac{n - 1}{2}\rfloor} (-1)^r \binom {n-1-r} {r} \textrm{ (mod 4)},
\end{equation}
where the same remarks on the powers of $P$ as in the previous case hold. But \eqref{case2} implies that $U_n(P,Q) \equiv U_n(1,1)$ (mod 4), where $U(1,1)_n$ is the sequence $U_n(1,1) = U_{n-1}(1,1) - U_{n-2}(1,1)$, with  $U_{0}(1,1) = 0$ and $U_{0}(1,1) = 1$. Then, by considering the table
\[
\begin{array} {l|cccccccccccc}
n & 1 & 2& 3& 4& \textbf{5}& 6& 7& 8& 9 & 10 & \textbf{11} &...\\
\hline
U_n(1,1)& 1 & 1 & 0 & -1 & \textbf{$-1$} & 0 & 1&1 & 0 & -1 &  \textbf{$-1$} & ...\\
U_n(1,1) \textrm{ (mod 4)} & 1 & 1 & 0 & 3 & \textbf{3} & 0 & 1&1 & 0 &3 & \textbf{3} & ...\\
\end{array}\]
the result follows, as reduction modulo 4 of the $U_n(1,1)$ is cyclic with period $1,1,0,3,3,0$ of length 6. This means that $U_n(P,Q) \equiv U_n(1,1) \equiv 3$ (mod 4) when $n \equiv 5$ (mod 6), as required.\end{proof}

In a similar spirit, we prove the following.

\begin{theorem} For $P, n$ odd, $n \geq 3$, and $Q \equiv 2$ (mod 4), we have $U_n(P,Q) \neq \square$.
\end{theorem}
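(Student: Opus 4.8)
The plan is to reuse the modulo-$4$ reduction from the previous two theorems, but now the hypothesis $Q \equiv 2 \pmod 4$ makes things even simpler. Writing $U_n$ for $U_n(P,Q)$ and starting from the closed formula \eqref{formula},
\[
U_n = \sum_{r=0}^{\lfloor \frac{n-1}{2}\rfloor} (-1)^r\, P^{\,n-1-2r}\, Q^r \binom{n-1-r}{r},
\]
I would first observe that $Q \equiv 2 \pmod 4$ forces $Q^r \equiv 0 \pmod 4$ for every $r \ge 2$, so all terms with $r \ge 2$ drop out modulo $4$. Since $n \ge 3$ we have $\lfloor \frac{n-1}{2}\rfloor \ge 1$, so exactly the two terms $r = 0$ and $r = 1$ survive, and the whole problem collapses to understanding $P^{n-1} - (n-2)\,P^{n-3}Q \pmod 4$.

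Next I would handle the powers of $P$: because $n$ is odd, each exponent $n-1-2r$ is even, and an even power of an odd integer is $\equiv 1 \pmod 4$; hence $P^{n-1} \equiv P^{n-3} \equiv 1 \pmod 4$. Combining this with the previous step gives
\[
U_n \equiv \binom{n-1}{0} - Q\binom{n-2}{1} \equiv 1 - (n-2)\,Q \pmod 4.
\]
Now $n$ odd means $n-2$ is odd, and an odd multiple of an integer $\equiv 2 \pmod 4$ is again $\equiv 2 \pmod 4$, so $U_n \equiv 1 - 2 \equiv 3 \pmod 4$. Since every perfect square is $\equiv 0$ or $1 \pmod 4$, this shows $U_n(P,Q) \neq \square$, which is the claim.

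I do not expect any real obstacle here: the argument is a one-line congruence once the closed formula is in hand. The only points needing a little care are (i) checking that the $r \ge 2$ terms really vanish modulo $4$, which is immediate from $4 \mid Q^2$; (ii) the parity bookkeeping on the exponents of $P$ and on $n-2$; and (iii) the role of the hypothesis $n \ge 3$, which is exactly what guarantees that the $r = 1$ term is present — indeed for $n = 1$ one has $U_1 = 1 = \square$, so the hypothesis cannot be dropped. In contrast to the previous theorem, no periodicity argument and no comparison with a Fibonacci-type sequence is needed, since $Q \equiv 2 \pmod 4$ already annihilates all but the first two terms of the sum.
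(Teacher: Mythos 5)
Your argument is correct and follows essentially the same route as the paper: reduce the closed formula modulo $4$, observe that $Q^r \equiv 0 \pmod 4$ for $r \ge 2$ and that the even powers of the odd integer $P$ are $\equiv 1 \pmod 4$, so only the $r=0,1$ terms survive and give $1-(n-2)Q \equiv 3 \pmod 4$. Your handling of the $r=1$ term (using that $n-2$ is odd) matches the paper's computation $1+2n \equiv 3 \pmod 4$, and your remark on why $n \ge 3$ is needed is a nice extra.
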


\begin{proof}  From \eqref{formula}, we get
\begin{equation}\label{case3}
U_n(P,Q) \equiv \sum_{r=0}^{\lfloor \frac{n - 1}{2}\rfloor} (-1)^r 2^r\binom {n-1-r} {r} \equiv 1 + (-1)\cdot 2 \cdot \binom{n-2}{1} \equiv 1 + 2n \equiv 3 \textrm{ (mod 4)},
\end{equation}

since the powers of $P$ in the sum are always even (since $n$ is odd), meaning  (since $P$ is odd) that these powers are congruent to 1 modulo 4, since $n\geq 3$ implies that $r \geq 1$, and since for $r \geq 2$ the powers $Q^r \equiv 2^r$ vanish modulo 4.
\end{proof}

\begin{theorem}For $P$ odd such that $P \equiv -1$ (mod 4), $n$ even, and $Q \equiv 0$ (mod 4), we have $U_n(P,Q) \neq \square$.
\end{theorem}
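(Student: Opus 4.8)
The plan is to read $U_n(P,Q)\bmod 4$ directly off the closed formula \eqref{formula}, exactly as in the proofs of the three preceding theorems. The key structural remark is that, since $n$ is even, every exponent $n-1-2r$ occurring in \eqref{formula} is odd; moreover $r$ runs over $0\le r\le\lfloor\frac{n-1}{2}\rfloor=\frac n2-1$, so that exponent lies between $1$ and $n-1$. Thus each power of $P$ in the sum is an odd power of an odd integer.

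Next I would use the hypothesis $Q\equiv 0\pmod 4$: this kills every term with $r\ge 1$, since then $Q^r\equiv 0\pmod 4$. Hence only the $r=0$ term survives, giving
\[
U_n(P,Q)\equiv P^{\,n-1}\binom{n-1}{0}=P^{\,n-1}\pmod 4.
\]
Now $P\equiv-1\pmod 4$ and $n-1$ is odd, so $P^{\,n-1}\equiv(-1)^{\,n-1}\equiv-1\equiv 3\pmod 4$. Since every perfect square is congruent to $0$ or $1$ modulo $4$, it follows that $U_n(P,Q)\neq\square$. (For $n=0$ one has $U_0=0$, which is not a non-zero square either, so nothing is lost by allowing all even $n$; for even $n\ge 2$ the formula \eqref{formula} applies verbatim.)

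I do not expect any real obstacle: the proof is a single congruence computation, structurally identical to the $Q\equiv 2\pmod 4$ case, the only new ingredient being that the sign of $P^{\,n-1}$ — governed by the parity of $n-1$, hence by $n$ being even — is what forces the residue $3$. The one point worth a sentence of care is verifying that $n-1$ is odd and that the leading exponent of $P$ is therefore genuinely an odd power, which is precisely why the hypothesis ``$n$ even'' (paired with $P\equiv-1\pmod 4$) is the right one for this residue class.
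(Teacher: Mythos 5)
Your proof is correct and is essentially the paper's own argument: reduce \eqref{formula} modulo $4$, note that $Q\equiv 0\pmod 4$ kills every term with $r\ge 1$, and conclude $U_n(P,Q)\equiv P^{n-1}\equiv(-1)^{n-1}\equiv 3\pmod 4$ since $n-1$ is odd, which rules out a square. The only (harmless) addition is your aside about $n=0$; otherwise the two proofs coincide.
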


\begin{proof}  From \eqref{formula}, we get
\begin{equation}\label{case4}
U_n(P,Q) \equiv \sum_{r=0}^{\lfloor \frac{n - 1}{2}\rfloor} (-1)^r (-1)^{n-1-2r}4^r\binom {n-1-r} {r} \equiv (-1)^{n-1} \equiv -1 \textrm{ (mod 4)},
\end{equation}
as $n-1$ is odd, and for $r \geq 1$, the powers $Q^r \equiv 4^r$ vanish modulo 4.
\end{proof}

\begin{theorem} For $P \equiv 2,3$  (mod 4)  and $n=2$, we have $U_2(P,Q) \neq \square$.
\end{theorem}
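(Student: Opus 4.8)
The plan is to observe that the statement is essentially immediate from the closed formula \eqref{formula} (or even directly from the recurrence \eqref{recrel}) combined with the elementary fact, already invoked in this section, that every perfect square in $\mathbb{N}$ is congruent to $0$ or $1$ modulo $4$.

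First I would compute $U_2(P,Q)$ explicitly. Setting $n = 2$ in \eqref{formula} gives
\[
U_2(P,Q) = \sum_{r=0}^{\lfloor 1/2 \rfloor} (-1)^r P^{1-2r} Q^r \binom{1-r}{r} = P,
\]
the sum having only the single term $r = 0$; alternatively, $U_2(P,Q) = P\cdot U_1(P,Q) - Q\cdot U_0(P,Q) = P\cdot 1 - Q\cdot 0 = P$ straight from \eqref{recrel}. So the claim reduces to showing that $P$ is not a non-zero square whenever $P \equiv 2$ or $3 \pmod 4$.

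Next I would conclude: if $P \equiv 2 \pmod 4$ or $P \equiv 3 \pmod 4$, then in particular $P \not\equiv 0 \pmod 4$ and $P \not\equiv 1 \pmod 4$, so $P$ cannot be a perfect square; note also that $P \equiv 2, 3 \pmod 4$ forces $P \neq 0$, and a negative $P$ is trivially not a square, so nothing is lost by phrasing things in terms of residues. Hence $U_2(P,Q) = P \neq \square$, as required.

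There is no real obstacle here: the only thing to be slightly careful about is the bookkeeping with the floor function and the empty/near-empty range of summation when $n = 2$ in \eqref{formula}, and the (trivial) observation that the hypothesis on $P$ rules out $P = 0$ so that the symbol $\square$, which denotes a \emph{non-zero} square, is the relevant notion. I expect the entire proof to be two or three lines.
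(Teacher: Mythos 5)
Your proof is correct and follows essentially the same route as the paper: evaluate $U_2(P,Q) = P$ from \eqref{formula} (noting $\lfloor\frac{n-1}{2}\rfloor = 0$ when $n=2$) and invoke the fact that squares are $0$ or $1$ modulo $4$. Your extra remarks about the non-zero and negative cases are fine but not needed beyond what the paper already does.
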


\begin{proof}  From \eqref{formula}, we get
\begin{equation}\label{case5}
U_2(P,Q) \equiv  P \equiv 2,3 \textrm{ (mod 4)},
\end{equation}
where we have used the fact that, for $n=2$, $\lfloor\frac{n-1}{2}\rfloor = 0$.
\end{proof}

\medskip

\noindent MSC2013: 11B39


\begin{thebibliography}{99}

\bibitem{bremtza}
A. Bremner and N. Tzanakis. \emph{Lucas sequences whose 12th and 9th term is a square}, Journal of Number Theory, \textbf{107} (2004), 215--227.
\bibitem{bremtza2}
A. Bremner and N. Tzanakis. \emph{On squares in Lucas sequences}, Journal of Number Theory, \textbf{124}(2) (2007), 511--520.
\bibitem{falt}
G. Faltings. \emph{ Endlichkeitss{\"a}tze f{\"u}r abelsche Variet{\"a}ten über Zahlk{\"o}rpern}, Inventiones Mathematicae, \textbf{73}(3) (1983), 349--366.
\bibitem{ribenboim}
P. Ribenboim and W.L. McDaniel.\emph{The square terms in Lucas sequences}, Journal of Number Theory, \textbf{58}(1) (1996), 104--123.
\bibitem{lucas}
E.W. Weisstein. \emph{Lucas sequence}, from MathWorld - A Wolfram Web Resource.
\verb|http://mathworld.wolfram.com/LucasSequence.html|.

\end{thebibliography}
\end{document}